\documentclass[11pt]{article}

\usepackage{enumitem}
\usepackage{epsfig}
\usepackage[mathscr]{euscript}
\usepackage{csquotes}

\usepackage[linesnumbered, ruled,noend,noline]{algorithm2e}

\usepackage{float}
\usepackage{graphicx}
\usepackage{ stmaryrd }
\usepackage{multirow}
\usepackage{gensymb}
\usepackage{array}
\usepackage{listings}
\usepackage{amsthm}

\renewcommand\thesubsection    {\thesection.\arabic{subsection}}
\usepackage{amssymb}
\usepackage{graphicx,amsmath,calc}

\newtheorem{theorem}{Theorem}[section]

\newtheorem{corollary}[theorem]{Corollary}
\newtheorem{definition}[theorem]{Definition}
\newtheorem{remark}[theorem]{Remark}

\numberwithin{equation}{section}

\title{Intrinsic Information Flow in Structureless NP Search}
\author{Jing-Yuan Wei\thanks{Wide Area Grid-Forming (Shenzhen) Energy Co. Ltd. 
3039 Bao'an North Road, Shenzhen, China. Email: weijingyuan@gmail.com}
}
\date{April 2026} 

\begin{document}
\maketitle

\begin{abstract}
Rather than measuring NP search in terms of Turing-machine time,
we reinterpret witness recovery as an information-acquisition process:
the hidden witness is the sole source of uncertainty,
and identification requires sufficient reduction of this uncertainty
through a rate-limited access interface in the sense of Shannon.

To make this perspective explicit, we analyze an extreme regime,
the \emph{psocid model}, in which the witness is accessible only via
equality probes $[\pi = w^\star]$ under a uniform, structureless prior.
Each probe reveals at most $O(N/2^N)$ bits of mutual information,
so polynomially many probes accumulate only $o(1)$ total information.
By Fano's inequality, reliable recovery requires $\Omega(N)$ bits,
creating a fundamental mismatch between the information
required for recovery and that obtainable through the interface.

The psocid setting isolates a fully symmetric search regime
in which no intermediate computation yields global eliminative leverage,
thereby exposing an intrinsic informational origin of exponential search complexity.
\end{abstract}

\noindent\textbf{Keywords:}
NP search, information theory, mutual information, information-theoretic lower bounds, Fano inequality.

\section{Introduction}

At the core of NP search lies a familiar asymmetry:
verifying a proposed witness is easy, while \emph{discovering} it among
exponentially many candidates may be far harder.
Bellare and Goldwasser~\cite{BellareGoldwasser1994} showed that,
under suitable complexity assumptions, there exist languages in NP
for which search does not reduce to decision,
providing evidence that witness recovery may be strictly harder than verification.

As a complementary perspective,
we adopt an information-theoretic viewpoint:
\emph{witness discovery is an information-acquisition process}.
This viewpoint builds on ideas from communication complexity,
where computational difficulty has long been framed in terms of
information transfer since the work of Abelson~\cite{Abelson1978}
and Yao~\cite{Yao1979}.
Yao's minimax principle~\cite{Yao1977} shows that randomized lower bounds
can be obtained via suitable distributional analysis,
an approach central to modern communication complexity.
In Shannon's framework, successful computation requires sufficient
information transfer through a constrained interface.
Our contribution is to internalize this information-theoretic viewpoint
directly within NP witness discovery.

To make this framework concrete, we study an extreme access regime,
the \emph{psocid model}: an instance consists of a library with $2^N$
pages indexed by $\{0,1\}^N$, exactly one of which is marked.
The marked page has index $w^\star \in \{0,1\}^N$.
In each round, $p(N)$ parallel probes - where $p(N)$ is polynomially bounded - 
inspect one page each and receive a single bit indicating whether that page is marked.
Once the marked page is found, the algorithm outputs the $N$-bit index together with
a constant-size certificate, enabling polynomial-time verification.

This separation between fast verification and slow information
acquisition appears in many domains - database
auditing,
scientific search-assay pipelines, and mineral exploration drilling - 
all of which permit rapid local checks while limiting how quickly
the correct candidate can be identified.

A similar separation arises in large-scale infrastructure inspection.
One real-world example occurs in a high-speed rail overhead contact system,
where approximately three million screws are photographed every three months
during scheduled nighttime maintenance windows.
Each photograph is later inspected individually,
with more than twenty inspectors reviewing the images.
Verifying a single screw takes only constant time,
yet locating a rare loose component necessitates scanning
an enormous number of candidates.
When a defective screw is detected, engineers are dispatched
to replace it in the field.

The psocid model isolates this rare-event inspection structure
in its simplest, fully symmetric form. Here, the bottleneck is not computational complexity
but the limited information obtained per inspection:
each image answers only a single local question,
and most inspections return a negative result.

\paragraph{Equality-only access.}
In the psocid model, the witness influences the computation only through
evaluations of 
\[
\mathrm{EQ}(\pi) := [\, \pi = w^\star \,],
\]
which returns $1$ if $\pi$ equals the hidden witness and $0$ otherwise.
We refer to such evaluations as \emph{equality probes}.
Under a uniform prior on $w^\star$,
each probe outcome is a highly biased Bernoulli random variable with entropy
\[
H = h(2^{-N}) = O(N/2^N),
\]
where $h(p) := -p\log p -(1-p)\log(1-p)$ denotes the entropy function.
So each probe conveys only exponentially small mutual information
about the witness.

This raises a fundamental question:  
\emph{Can any polynomial-time algorithm identify $w^\star$ in this access model?}

\paragraph{Main results.}
We answer this question negatively via an information-theoretic barrier.
Fano's inequality implies that recovering an $N$-bit witness with
constant success probability requires $\Omega(N)$ bits of mutual
information.
However, each probe yields at most $O(N/2^N)$ bits of mutual information,
so polynomially many probes can accumulate only $o(1)$ total mutual
information.
This creates a fundamental mismatch between the required and obtainable
information, yielding an information-theoretic impossibility of
polynomial-time recovery under this probe access model.

Our conclusions are specific to the psocid setting under a
uniform, structureless prior.
They do not constitute a general claim about NP search
in the standard Turing-machine model.
The psocid access regime is not intended as a natural or
universal abstraction of NP search.
Rather, it serves to expose an extreme informational bottleneck
and thereby clarify the consequences of viewing witness discovery
as an information-acquisition process.
Whether and how this perspective extends to broader
computational models remains for future investigation.

Section~\ref{sec:npsearch} formalizes the psocid language.
Section~\ref{sec:model} presents the access model.
Section~\ref{sec:abstract-lb} proves the information-theoretic barrier.
Section~\ref{sec:space} derives the time-space tradeoff.
Section~\ref{sec:interpretation} discusses conceptual context.
Section~\ref{sec:conclusion} concludes.

\section{The Psocid Search Problem}
\label{sec:npsearch}

To prepare for the psocid access model introduced in the next section,
we first formalize the associated decision problem
in standard complexity-theoretic terms.

\begin{definition}[Verifier, after~\cite{Sipser2013}]
A \emph{verifier} for a language $A$ is a deterministic algorithm
$V(w,c)$ that takes an instance $w$ and a certificate $c$.
The language is
\[
A = \{\, w \mid \exists\, c \text{ such that } V(w,c) \text{ accepts} \,\}.
\]
If $V$ runs in time polynomial in $\lvert w\rvert$, then
$A$ is \emph{polynomially verifiable}, and hence $A \in \mathbf{NP}$.
\end{definition}

\begin{definition}[Psocid-SAT]
Let $N \ge 1$. Each page in the library is indexed by an
$N$-bit string $w \in \{0,1\}^N$, and exactly one page (if any)
contains a psocid mark. The associated decision
language is
\[
\mathrm{Psocid\mbox{-}SAT}
  =\{\, w \mid \exists\,c \text{ such that } V(w,c)\text{ accepts}\,\},
\]
where $V$ is a deterministic polynomial-time verifier. A certificate has the
form $c=(w^\star,\text{photo})$, consisting of the claimed index $w^\star$ and
an $O(1)$ photo witness certifying the psocid mark. The verifier accepts if and only if
the photo is valid and the page indexed by $w^\star$
is marked in the manner shown in the photo..
\end{definition}

\begin{theorem}
$\mathrm{Psocid\mbox{-}SAT} \in \mathbf{NP}$.
\end{theorem}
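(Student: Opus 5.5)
The plan is to apply the verifier definition above directly: exhibit a polynomial-time deterministic $V$ together with a certificate of polynomial length, and conclude $\mathrm{Psocid\mbox{-}SAT}\in\mathbf{NP}$ from that definition. The instance is the library encoding, indexed by $N$-bit strings, and its size parameter is taken to be $N$ (or the length of whatever succinct description of the library is supplied). We adopt the convention, implicit in the psocid model, that inspecting a single page $w^\star$ is a unit-cost (or $\mathrm{poly}(N)$-cost) oracle/lookup operation. This is exactly what Section~\ref{sec:model} will formalize via the equality probe $\mathrm{EQ}$.

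The steps, in order, are as follows. First, bound the certificate length. The certificate $c=(w^\star,\text{photo})$ consists of $N$ bits for the index plus $O(1)$ bits for the photo, so $|c| = N+O(1)$, which is polynomial in the instance size. Second, describe $V$. It parses $c$ into the index $w^\star$ and the photo, and checks that the photo is syntactically valid. This takes $O(1)$ time, since the photo has constant size and validity is a fixed finite predicate. It then reads page $w^\star$ using an $N$-bit address in $O(N)$ time, and compares its mark against the photo in $O(1)$ time. It accepts iff both checks pass. The total time is $O(N)$.

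Third, prove correctness in both directions. If the library contains a marked page, take $c$ to be that page's index with its genuine photo; $V$ accepts. Conversely, if $V$ accepts some $c$, then the page indexed by $w^\star$ is marked as the photo shows, so the instance is in the language. This matches the defining condition of $\mathrm{Psocid\mbox{-}SAT}$ exactly, so the language coincides with $\{w \mid \exists c,\ V(w,c)\text{ accepts}\}$.

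The main obstacle is the second step, namely justifying that reading page $w^\star$ costs only polynomial time. If the library were written out explicitly, the input would have length about $2^N$, and the statement would become trivial but uninteresting. I would first fix the access convention: the verifier gets random access to the page at address $w^\star$, or equivalently one call to $\mathrm{EQ}(w^\star)$, at cost $\mathrm{poly}(N)$. I would then measure running time in $N$. Under that convention, the remaining steps are routine.
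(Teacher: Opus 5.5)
Your proposal is correct and follows the same route as the paper: you exhibit a verifier that checks the constant-size photo and then inspects page $w^\star$, with both steps polynomial in $N$. Your version is in fact slightly more careful than the paper's: you count the certificate length correctly as $N+O(1)$ (the paper says $O(1)$, overlooking the $N$-bit index), and you make explicit the unit-cost page-access convention that the paper's step~2 uses only implicitly.
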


\begin{proof}
Given an input index $w$ and a certificate $c=(w^\star,\text{photo})$,
the verifier $V(w,c)$:
\begin{enumerate}[label=\arabic*.,leftmargin=2em]
\item verifies in $O(1)$ time that the \textit{photo} depicts a psocid mark;
\item checks that the page indexed by $w^\star$ contains a psocid mark matching the one shown in the \textit{photo}.
\end{enumerate}
Both steps run in time polynomial in $N=\lvert w\rvert$, and the
certificate has size $O(1)$.
Hence $\mathrm{Psocid\mbox{-}SAT}$ is polynomial-time verifiable
and lies in $\mathbf{NP}$.
\end{proof}

\begin{remark}[Decision versus search]
The decision problem above is trivially in $\mathbf{NP}$,
as verification is local and efficient.
The difficulty studied in this paper lies not in verification,
but in the \emph{search} task of discovering the unique marked
index $w^\star$ under restricted information access.
\end{remark}

\begin{remark}[Parallelism bound $p(N)$]
In the access model of Section~\ref{sec:model}, we allow
$p(N)$ parallel searchers with $p(N)$ polynomially bounded.
Since each searcher records at least one probe outcome per round,
the aggregate workspace scales with $p(N)$.
Restricting $p(N)$ to be polynomial ensures that total resources
remain polynomial.
Allowing superpolynomial parallelism would implicitly permit
superpolynomial information storage per round, moving the model
outside the intended NP regime.
\end{remark}

\section{Two-Stage, One-Way Search-Verification Model}
\label{sec:model}

The psocid search task can be viewed as a communication process
over a noiseless but capacity-limited channel linking three entities:
the instance (which holds the hidden index $w^\star$),
the parallel searchers, and the librarian.
This interpretation follows the communication-complexity viewpoint
of Yao~\cite{Yao1979}.

The interaction proceeds in two one-way stages.
In the \emph{search stage}, information flows from the instance
to the searchers through equality probes,
each revealing a single bit of feedback.
In the \emph{verification stage}, once a candidate is located,
a short certificate (e.g., a constant-size photo)
is transmitted to the librarian,
who outputs a binary decision.
Both stages are one-way and capacity-limited:
the first by the equality-probe interface,
the second by the bounded reporting bandwidth required
to transmit the certificate to the verifier.
Verification itself is computationally efficient.

\paragraph{Setup.}
Let $n := 2^N$ denote the number of pages and
let $w^\star \in \{0,1\}^N$ be the unique index of the marked page.
The goal is to discover $w^\star$ hiring $p(N)$ parallel searchers,
where $p(N)$ is polynomially bounded.

In round $t$, searcher $j \in [p(N)]$ probes a candidate
$\pi(t,j)\in\{0,1\}^N$ and receives
\[
Y_{t,j} := [\,\pi(t,j)=w^\star\,] \in \{0,1\}.
\]
The process halts at the first success,
\[
T_{\mathrm{search}} := \min\{\, t : \exists j \text{ with } Y_{t,j}=1 \,\}.
\]
Under a public probing schedule without replacement,
the pair $(T_{\mathrm{search}},J)$ uniquely determines $w^\star=\pi(T_{\mathrm{search}},J)$.
Throughout this section, $w^\star$ is drawn uniformly from $\{0,1\}^N$.


\paragraph{Information in a single probe.}

Let $p := 1/n = 2^{-N}$.
For a single probe, the outcome $Y$ satisfies
\[
\Pr(Y=1)=p, \qquad \Pr(Y=0)=1-p.
\]
Define the binary entropy function
\[
h(p) := -p\log p - (1-p)\log(1-p).
\]
Throughout this section, $\log(\cdot)$ denotes the base-2 logarithm.
Then $Y \sim \mathrm{Bernoulli}(p)$ and its Shannon entropy (in bits) is
\[
H(Y)=h(p).
\]

The mutual information between $w^\star$ and a probe outcome $Y$ is 
(Cover-Thomas~\cite{CoverThomas2006}), 
\[
I(w^\star;Y)
=
H(Y)-H(Y\mid w^\star)
\le H(Y)
=
h(p).
\]


\paragraph{Search-stage information accumulation.}

$T_{\mathrm{search}}$ is the (random) round of the first hit, and define
$q := T_{\mathrm{search}}\cdot p(N)$ as the (random) number of probes issued
up to and including that round.
To apply the chain rule, we flatten the $p(N)$ parallel outcomes in each round
into a single sequence of $q$ scalar bits
\[
\mathcal{F}_q := (y_1,\dots,y_q),
\]
where $\{y_k\}_{k \ge 1}$ enumerates the coordinates $Y_{t,j}$ in lexicographic order of $(t,j)$.
This is purely a notational device: each outcome $Y_{t,j}$ up to and including round
$T_{\mathrm{search}}$ appears exactly once in $\mathcal{F}_q$.

After $k-1$ misses, the posterior over candidates
remains uniform over $n-(k-1)$ indices.
Hence
\[
\Pr(y_k=1\mid y_{<k})
=
\frac{1}{n-(k-1)}  ~~
\Longrightarrow ~~ H(y_k\mid y_{<k})
=
h\!\left(\frac{1}{n-(k-1)}\right)
\]
so
\[
I(w^\star;y_k\mid y_{<k})
\le H(y_k\mid y_{<k}) =
h\!\left(\frac{1}{n-(k-1)}\right).\]

By the chain rule,
\begin{equation}
\label{eq:chainrule}
I(w^\star;\mathcal{F}_q) =\sum_{k=1}^q I(w^\star;y_k\mid y_{<k})
\le 
\sum_{k=1}^q
h\!\left(\frac{1}{n-(k-1)}\right).
\end{equation}


\paragraph{Information required for reliable recovery.}

The previous calculation quantifies exactly how much mutual
information can be accumulated after $q$ probes.
We now compare this obtainable information with the amount
required to identify $w^\star$ reliably.

To recover $w^\star$ with error probability at most
$\varepsilon<1/3$, the probe transcript must reduce
this uncertainty by a constant fraction.
Formally, Fano's inequality (Cover-Thomas~\cite{CoverThomas2006}) gives
\[
H(w^\star \mid \mathcal{F}_q)
   \;\le\;
   h(\varepsilon)
   + \varepsilon \log(n-1),
\]
where $n=2^N$ is the size of the hypothesis space.
Since
$I(w^\star;\mathcal{F}_q)=H(w^\star)-H(w^\star\mid\mathcal{F}_q)$,
\[
I(w^\star;\mathcal{F}_q)
   \;\ge\;
   H(w^\star)
   - h(\varepsilon)
   - \varepsilon\log(n-1).
\]

Because $w^\star$ is uniform on $\{0,1\}^N$, it has entropy
\[
H(w^\star)
   = \sum_{i=1}^{n} \log(n)/n
   = \log(n).
\]
Thus a uniformly random $N$-bit index contains exactly $\log(n)=N$ bits of
uncertainty. Substituting $H(w^\star)=\log(n)$,
\begin{equation}
\label{eq:fano}
\begin{aligned}
& I(w^\star;\mathcal{F}_q) &
   \;\ge\; &
   \log(n) - h(\varepsilon)
       - \varepsilon\log(n-1)\\
& &\; =  \;&(1-\varepsilon)\log(n)- h(\varepsilon) + o(1),
\end{aligned}
\end{equation}
where $o(1) \to 0$ as $n \to \infty$.

Combining \eqref{eq:chainrule} and \eqref{eq:fano}, to recover $w^\star$ with error probability at most $\varepsilon<1/3$,
it is necessary that
\begin{equation}
\label{eq:qbound}
\sum_{k=1}^q h\!\left(\frac{1}{n-(k-1)}\right)
\;\ge\;
(1-\varepsilon)\log n
- h(\varepsilon) + o(1).
\end{equation}

Appendix~A (see \eqref{eq:q-theta}) shows that this inequality forces
\[
q \,\ge\, \bigl(1-e^{-(1-\varepsilon)+o(1)}\bigr)n= \Theta(n) = \Theta(2^N).
\]
Since $q=T_{\mathrm{search}}\cdot p(N)$,
\begin{equation}
\label{eq:Tsearch-clean}
T_{\mathrm{search}}
=
\Omega\!\left(\frac{2^N}{p(N)}\right).
\end{equation}


\paragraph{Verification stage.}

Once the marked page has been located,
the searcher must transmit its $N$-bit index
together with a constant-size photo to the librarian.

Communication in this stage is one-way and bandwidth-limited.
In each round, at most $p(N)$ bits can be transmitted:
each of the $p(N)$ parallel searchers can send at most
a constant-size message per round.
Let $A_t$ denote the $t$-th payload and
$\mathcal{G}_r = (A_1,\dots,A_r)$
the verification transcript.
Since each $A_t$ is a binary string of length at most $p(N)$,
\(
|A_t| \le p(N).
\)
Thus the total transcript length satisfies
\[
|\mathcal{G}_r|
=
\sum_{t=1}^r |A_t|
\le
r\,p(N).
\]

To verify the witness, the librarian must recover the
$N$-bit index $w^\star$.
Thus the verification transcript must convey $\Theta(N)$ bits
of information.
Consequently,
\(
r\,p(N) \;\ge\; N,
\)
and 
\(
r \;\ge\; N/p(N).
\)

Therefore the verification stage requires
\begin{equation}
\label{eq:Tverify-clean}
T_{\mathrm{verify}}
=
\Omega\!\left(\frac{N}{p(N)}\right).
\end{equation}


\paragraph{Total time.}

Since $T=T_{\mathrm{search}}+T_{\mathrm{verify}}$,
combining \eqref{eq:Tsearch-clean} and \eqref{eq:Tverify-clean} yields
\begin{equation}
\label{eq:T-total}
T = \Omega\!\left(\frac{2^N}{p(N)}\right)
+ \Omega\!\left(\frac{N}{p(N)}\right) = 
\Omega\!\left(\frac{2^N}{p(N)}\right).
\end{equation}


\begin{remark}[Expected search time vs.\ information requirement]
Under probing without replacement and a uniform prior on $w^\star$,
the position of the marked page in any fixed probing order is uniform
over $\{1,\dots,n\}$.
Hence the expected number of probes until the first hit is
\[
\mathbb{E}[Q] = \frac{n+1}{2}.
\]
This average-case statement concerns the stopping time of search.

Our information-theoretic bound is of a different nature.
It does not analyze the expected location of the first hit.
Instead, it quantifies how many probes are required before the probe
transcript can contain sufficient mutual information to identify
$w^\star$ reliably.

In particular, the entropy bound for zero-error recovery implies
\[
q \ge (1-e^{-1})\,n,
\]
where $1-e^{-1} \approx 0.632$.
Thus a linear number of probes is necessary before the transcript can
determine the witness.
Equivalently, if $q < (1-e^{-1})\,n$, the probe transcript cannot carry
sufficient mutual information to determine $w^\star$ with zero error
under the uniform prior.
\end{remark}


\begin{remark}[Information interface and scope]
The lower bound is model-dependent: it applies to algorithms that access
the witness solely through equality probes.
The psocid framework restricts only this information interface; internal
computation remains unrestricted.
Under the uniform prior, each probe outcome
$Y$ has entropy $h(1/n)=O((\log n)/n)$, independent of scheduling.
Thus all algorithms receive the same exponentially vanishing
per-probe information, and since the internal state of the algorithm
is a function of the probe transcript, the mutual information with
$w^\star$ cannot exceed that contained in the transcript.
\end{remark}

\section{An Information-Theoretic Barrier to Polynomial-Time Psocid Search}
\label{sec:abstract-lb}

In the psocid model, the witness \(w^\star \in \{0,1\}^N\) is accessible
only through equality probes of the form \([\,\pi = w^\star\,]\).
Section~\ref{sec:model} shows that each probe reveals at most
\(O(N/2^N)\) bits of mutual information about \(w^\star\).
Thus the probe interface defines an information channel with
exponentially vanishing per-use capacity.

We show that polynomially many uses of this channel cannot
accumulate sufficient information to recover an \(N\)-bit witness
with non-negligible success probability.

\paragraph{Standing premise.}
Since \(w^\star\) is drawn uniformly from
\(\{0,1\}^N\), 
all statements in this section are conditioned on this uniform, structureless prior.

An arbitrary (adaptive, randomized, or parallel) algorithm performs
\(q\le poly(N)\) probes for some polynomial \(poly(\cdot)\), producing the
transcript
\[
\mathcal{F}_q = (y_1,\dots,y_q), \qquad y_k \in \{0,1\}.
\]
After observing \(\mathcal{F}_q\), the algorithm outputs
\[
\hat w := A(\mathcal{F}_q) \in \{0,1\}^N.
\]
We say the algorithm succeeds with non-negligible probability if
\(\Pr[\hat w = w^\star] \ge \delta\) for some fixed constant
\(\delta>0\), independent of \(N\).


\paragraph{Step 1: Successful recovery requires linear information.}

Let \(P_e := \Pr[\hat w \neq w^\star] \le 1-\delta\).
By Fano's inequality,
\[
H(w^\star \mid \hat w)
\le
h(P_e) + P_e \log(2^N-1).
\]
Since \(w^\star\) is uniform on \(\{0,1\}^N\),
\(H(w^\star)=N\), and hence
\[
I(w^\star;\hat w)
=
H(w^\star)-H(w^\star\mid \hat w)
\ge
N - h(P_e) - P_e \log(2^N-1).
\]
Using \(\log(2^N-1)=N-o(1)\) and \(P_e \le 1-\delta\),
\[
I(w^\star;\hat w)
\ge
\delta N - h(1-\delta) + o(1)
\ge
cN,
\]
for some constant \(c>0\) depending only on \(\delta\).

Thus any recovery procedure with constant success probability
must acquire linear mutual information about \(w^\star\).

Since the algorithm's output $\hat w$ is computed solely from the full
$q$-probe transcript $\mathcal{F}_q$ (including its internal randomness),
we have
\[
\Pr(\hat w \mid \mathcal{F}_q, w^\star)
  = \Pr(\hat w \mid \mathcal{F}_q),
\]
so that
\(
w^\star \;\to\; \mathcal{F}_q \;\to\; \hat w
\)
forms a Markov chain.
By the data-processing inequality~\cite{CoverThomas2006},
\begin{equation}
\label{eq:dpi}
I(w^\star ; \mathcal{F}_q)
   \;\ge\; I(w^\star ; \hat w)
   \;\ge\; cN,
\end{equation}
for sufficiently large \(N\).


\paragraph{Step 2: Polynomially many probes carry vanishing information.}

By the chain rule (Section~\ref{sec:model}),
\[
I(w^\star;\mathcal{F}_q)
=
\sum_{k=1}^q
I(w^\star;y_k \mid y_{<k})
\,\le \,
\sum_{k=1}^q
h\!\left(\frac{1}{n-(k-1)}\right).
\]
Appendix~A (see~\eqref{eq:poly-o1}) shows that if \(q \le \mathrm{poly}(N)\),
then the right-hand side of the inequality above equals \(o(1)\).
Consequently,
\begin{equation}
\label{eq:cap-bound-clean}
I(w^\star;\mathcal{F}_q)
\;=\;
o(1).
\end{equation}

Thus polynomially many probes accumulate only
vanishing total mutual information.


\paragraph{Step 3: Incompatibility.}

Equations~\eqref{eq:dpi} and~\eqref{eq:cap-bound-clean}
cannot simultaneously hold for sufficiently large \(N\):
the former requires
\(I(w^\star;\mathcal{F}_q) \ge cN\),
whereas the latter carries
\(I(w^\star;\mathcal{F}_q)= o(1)\)
whenever \(q \le \mathrm{poly}(N)\).

This contradiction shows that polynomially many probes
cannot identify a uniformly random \(N\)-bit witness
with constant success probability.


\begin{theorem}
\label{thm:capacity}
In the psocid model under a uniform prior,
for every polynomial \(poly(\cdot)\) and every constant \(\delta>0\),
no algorithm making at most \(poly(N)\) probes can recover
\(w^\star \in \{0,1\}^N\) with success probability at least \(\delta\).
More precisely, any polynomial-length transcript carries only
\(o(1)\) mutual information about \(w^\star\).
\end{theorem}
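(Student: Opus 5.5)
The plan is to assemble Steps 1--3 above into a single contradiction, making two points rigorous along the way. The first point is the per-probe bound for \emph{arbitrary adaptive, randomized} algorithms. The second is the appendix estimate that the sum in \eqref{eq:chainrule} is $o(1)$ for polynomial $q$.

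First I will remove internal randomness. Let $R$ be the algorithm's random seed, independent of $w^\star$. Then $I(w^\star;\mathcal{F}_q)\le I(w^\star;\mathcal{F}_q,R)=I(w^\star;\mathcal{F}_q\mid R)$, so it suffices to bound the information for each fixed seed, that is, for a deterministic adaptive probing strategy. For such a strategy the transcript $\mathcal{F}_q$ is a deterministic function of the index $K\in\{1,\dots,q\}\cup\{\bot\}$ of the first hit (or $\bot$ if there is no hit). This is because, conditioned on all previous answers being $0$, the next query is fixed. Once a hit occurs, $w^\star$ is determined, so every later bit is a function of the history.

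With this reduction I can bound the information directly:
\[
I(w^\star;\mathcal{F}_q)\le H(\mathcal{F}_q)\le H(K).
\]
Along the all-miss path the posterior is uniform on the unprobed indices. Repeated probes contribute nothing, so we may assume the probes are distinct, which gives $\Pr[K=k]=1/n$ for $k\le q$ and $\Pr[K=\bot]=1-q/n$. Hence
\[
H(K)=\frac{q}{n}\log n+h\!\left(\frac{q}{n}\right)\le \frac{q}{n}\log n+\frac{q}{n}\log\frac{en}{q}=O\!\left(\frac{qN}{2^N}\right).
\]
This is $o(1)$ whenever $q\le poly(N)$. The bound agrees with summing $h(1/(n-k+1))$ as in \eqref{eq:chainrule}, and it proves the ``more precisely'' clause of the theorem. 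I regard this adaptivity step as the only genuine obstacle. The chain-rule computation in Section~\ref{sec:model} tacitly assumed a fixed schedule without replacement, and the first-hit-index encoding is what makes the argument valid against adaptive and repeated queries. If that encoding felt too slick, I would instead bound each term $I(w^\star;y_k\mid y_{<k})\le H(y_k\mid y_{<k})$ by conditioning on whether $y_{<k}$ contains a hit: the term is $0$ if it does, and at most $h(1/(n-k+1))$ if it does not.

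Second, the output $\hat w$ is a function of $(\mathcal{F}_q,R)$, so the data-processing inequality gives $I(w^\star;\hat w)\le I(w^\star;\mathcal{F}_q,R)=o(1)$, as in \eqref{eq:dpi}. Suppose success probability is at least $\delta$, so $P_e\le 1-\delta$. Then Fano's inequality, exactly as in Step 1, gives $I(w^\star;\hat w)\ge N-1-(1-\delta)N=\delta N-1$. For $N$ large this is at least $\delta N/2$, which contradicts the $o(1)$ upper bound. This proves the first claim of Theorem~\ref{thm:capacity}.

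As a sanity check I would also note the direct quantitative form: success probability is at most $(q+1)/2^N$, since a strategy that has not hit can only guess among the unprobed indices. This confirms that the bound is not vacuous and shows that the constant $\delta$ can even be replaced by any $\delta_N\gg qN/2^N$.
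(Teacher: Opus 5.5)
Your proof is correct, but it reaches the $o(1)$ bound by a different route than the paper.

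\textbf{The paper's route.} It bounds $I(w^\star;\mathcal{F}_q)$ with the chain rule. Each term $I(w^\star;y_k\mid y_{<k})$ is bounded by $h(1/(n-k+1))$, using uniformity of the posterior after $k-1$ misses. Appendix~A then evaluates $\sum_{k\le q}h(1/(n-k+1))$ asymptotically, through the asymptotics of $\sum_m \ln m/m$ and a Stieltjes constant. This gives $(\ln n+1)x-\tfrac12x^2+o(1)$ with $x=\ln(n/(n-q))$, which is $o(1)$ when $q=\mathrm{poly}(N)$.

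\textbf{Your route.} You first condition on the seed $R$, using $I(w^\star;\mathcal{F}_q)\le I(w^\star;\mathcal{F}_q\mid R)$, which holds because $R$ is independent of $w^\star$. You then note that for a deterministic strategy the entire transcript is a function of the first-hit index $K$, and you bound $H(K)$ in closed form.

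\textbf{What your route buys.}
\begin{itemize}
\item It is elementary and non-asymptotic: you get an explicit $O(qN/2^N)$ rather than an expansion with $o(1)$ error terms.
\item It justifies points the paper leaves implicit. The paper derives its per-term bound only for all-miss histories of a fixed public schedule without replacement. It then invokes that bound in Section~4 for ``adaptive, randomized, or parallel'' algorithms without treating repeated queries or histories that already contain a hit.
\item It handles the seed explicitly. The paper uses the Markov chain $w^\star\to\mathcal{F}_q\to\hat w$ with the seed only parenthetically folded into the bit transcript. Your $w^\star\to(\mathcal{F}_q,R)\to\hat w$ is the chain that actually holds.
\item Your Fano step uses $h(P_e)\le 1$. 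This is slightly cleaner than the paper's $h(1-\delta)$, because $h$ is not monotone on $[0,1-\delta]$ when $\delta<1/2$.
\end{itemize}

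\textbf{What the paper's route buys.} Its single asymptotic formula for $\sum_k h(1/(n-k+1))$ covers both the polynomial regime used here and the linear threshold $q\ge(1-e^{-(1-\varepsilon)+o(1)})n$ in Section~3.

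\textbf{One small slip.} The display $H(K)=\frac qn\log n+h(\frac qn)$ should be an inequality. The exact value is $H(K)=h(\frac qn)+\frac qn\log q$; you conflated $-(1-\frac qn)\log(1-\frac qn)$ with $h(\frac qn)$. Since the exact value is smaller, this only strengthens your bound.
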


\begin{corollary}[Polynomial-time recovery is impossible]
\label{cor:polytime}
In the psocid model, each probe requires at least one computation step,
so any polynomial-time algorithm can make at most polynomially many
probes. By Theorem~\ref{thm:capacity},
no polynomial-time algorithm can recover \(w^\star\) with success
probability bounded away from zero.
\end{corollary}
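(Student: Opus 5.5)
The corollary follows from Theorem~\ref{thm:capacity} once we show that polynomial running time implies polynomially many probes. Fix an algorithm $\mathcal{A}$ in the psocid model whose running time, counted in computation steps, is at most $t(N)$ for some polynomial $t$. The only new ingredient is the accounting convention stated in the corollary: every equality probe $[\pi=w^\star]$ uses at least one computation step. This covers issuing the query $\pi$ and reading the returned bit.

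\textbf{Step 1: polynomial time implies polynomially many probes.} Let $q$ be the total number of probes $\mathcal{A}$ makes over an execution. This counts all $p(N)$ parallel searchers in every round, flattened into the transcript $\mathcal{F}_q$ as in Section~\ref{sec:model}. Since each probe costs at least one step, $q \le t(N)$ on every execution, for every $w^\star$ and every setting of the internal randomness. If one charges time per round rather than per probe, the flattened count is at most $t(N)\cdot p(N)$. This is still a polynomial, because $p(N)$ is polynomially bounded by the parallelism remark. In either case $q \le \mathrm{poly}(N)$ uniformly.

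\textbf{Step 2: apply the theorem.} Because the bound from Step 1 is uniform, $\mathcal{A}$ is an algorithm making at most $\mathrm{poly}(N)$ probes in the sense of Theorem~\ref{thm:capacity}. Adaptivity, randomization and unbounded internal computation are all allowed there, since the output $\hat w$ depends only on $\mathcal{F}_q$ and independent coins. Suppose, for contradiction, that $\Pr[\hat w=w^\star]\ge\delta$ for some fixed $\delta>0$ and all large $N$. Applying the theorem with this polynomial and this $\delta$ gives a contradiction. Hence the success probability cannot be bounded away from zero.

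\textbf{Main obstacle.} The delicate point is the phrase ``bounded away from zero.'' The theorem is stated for a fixed $\delta$, so we must rule out success $\ge\delta$ holding along an infinite sequence of $N$, not only for all large $N$. I would handle this by noting that the theorem's contradiction (Steps 1--3 of Section~\ref{sec:abstract-lb}) is pointwise in $N$ for $N$ sufficiently large: $cN$ exceeds the $o(1)$ information bound once $N \ge N_0(\delta, \mathrm{poly})$. That threshold applies to every such $N$, whether or not the $N$ values form a subsequence. A second subtlety is that $q$ may be a random stopping time. The uniform bound $q\le t(N)$ lets us pad the transcript to exactly $t(N)$ probes without changing $\hat w$, which makes $q$ deterministic.
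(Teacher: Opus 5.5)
Your proposal is correct and follows the paper's argument: each probe costs at least one step, so polynomial time yields a uniform polynomial bound on the number of probes (at most $t(N)\,p(N)$ if time is charged per parallel round), and Theorem~\ref{thm:capacity} then rules out success probability bounded away from zero. Your extra care goes beyond the paper's one-line proof without changing its route: you note that the threshold $N_0(\delta,\mathrm{poly})$ does not depend on the algorithm, and you pad a random-length transcript to a fixed length (which should be the uniform bound $t(N)\,p(N)$ under per-round charging, not $t(N)$).
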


\begin{corollary}[Worst-case lower bound via Yao~\cite{Yao1977}]
\label{cor:yao}
Since Theorem~\ref{thm:capacity} establishes an average-case lower bound
under the uniform prior, Yao's minimax principle implies that for every
randomized algorithm using polynomially many probes, there exists an
input on which the algorithm cannot recover \(w^\star\) with constant
success probability.
\end{corollary}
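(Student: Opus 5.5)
The plan is to use the easy direction of Yao's minimax principle, namely averaging over the algorithm's internal randomness, together with Theorem~\ref{thm:capacity} applied to deterministic algorithms. Fix a polynomial $poly(\cdot)$ and a randomized algorithm $R$ that makes at most $q\le poly(N)$ probes. View $R$ as a probability distribution over its random string $r$. For each fixed $r$, the residual algorithm $A_r$ is deterministic, adaptive, and makes at most $q$ probes. Write $s(w):=\Pr_r[R \text{ outputs } w \text{ on instance } w^\star=w]$. The goal is to exhibit a single $w$ with $s(w)<\delta$.

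\emph{Step 1: a uniform bound for deterministic algorithms.} I would bound $\Pr_{w^\star}[A_r(\mathcal{F}_q)=w^\star]$ uniformly in $r$. One route is the Fano--data-processing chain of Steps~1--3: it gives $I(w^\star;\mathcal{F}_q)=o(1)$, and Fano then forces the success probability to be at most roughly $\bigl(I(w^\star;\mathcal{F}_q)+1\bigr)/N=o(1)$. For a cleaner, quantitative bound I would instead argue directly. A deterministic $A_r$ follows a single fixed probe sequence $\pi_1,\dots,\pi_q$ as long as every answer is $0$. It ends with a fixed guess $\hat w_0$ on the all-zero transcript. Hence $A_r$ can succeed only if $w^\star\in\{\pi_1,\dots,\pi_q,\hat w_0\}$. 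Under the uniform prior this gives
\[
\Pr_{w^\star}\bigl[A_r \text{ succeeds}\bigr]\le \frac{q+1}{2^N}.
\]

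\emph{Step 2: averaging over $r$ and over inputs.} Exchanging the order of expectation gives
\[
\frac{1}{2^N}\sum_{w} s(w)=\mathbb{E}_r\Bigl[\Pr_{w^\star}[A_r\text{ succeeds}]\Bigr]\le \frac{q+1}{2^N}.
\]
Since a minimum is at most an average, some $w$ satisfies $s(w)\le (q+1)/2^N$. This quantity is below any fixed $\delta>0$ once $N$ is large, because $q\le poly(N)$. That $w$ is the required hard input: on it, $R$ recovers $w^\star$ with probability below $\delta$.

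\emph{Main obstacle.} I expect the only delicate point to be the passage from randomized to deterministic algorithms. Theorem~\ref{thm:capacity} is phrased for algorithms whose randomness is folded into the transcript. I need to make sure the $o(1)$ success bound holds uniformly over all fixings of $r$, so that the average over $r$ stays small. The explicit $(q+1)/2^N$ bound from Step~1 settles this, since it does not depend on $r$. It also removes any need to invoke Yao's principle beyond the elementary swap of expectations used in Step~2.
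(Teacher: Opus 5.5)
Your proof is correct, but it takes a different route from the paper. The paper gets the corollary in one line. Theorem~\ref{thm:capacity} gives an average-case bound under the uniform prior through the information mismatch: Fano plus data processing demand $I(w^\star;\mathcal{F}_q)\ge cN$, while the chain rule and the Appendix asymptotics cap it at $o(1)$. Yao's minimax principle then transfers this to a worst-case input.

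You bypass the mutual-information machinery entirely. Your key observation is a direct decision-tree counting argument: a deterministic adaptive (or parallel) algorithm follows a single probe path along the all-zero transcript and ends in a fixed guess $\hat w_0$. It can therefore succeed only on the at most $q+1$ inputs $\pi_1,\dots,\pi_q,\hat w_0$. Your exchange of expectations followed by ``minimum $\le$ average'' is exactly the easy direction of Yao's principle, which is all the corollary needs.

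What your route buys:
\begin{itemize}
\item An explicit worst-case bound $(q+1)/2^N$, which is tight, since randomly permuting the probe order achieves it on every input. The paper only concludes that success eventually falls below every fixed $\delta$.
\item A clean treatment of adaptivity and parallelism. It does not need the paper's chain-rule computation, which is written for fresh probes and a posterior uniform over $n-(k-1)$ indices.
\end{itemize}
What the paper's route buys is consistency with its thesis: the barrier is phrased as a deficit of mutual information, which is the form the paper hopes might extend to other access interfaces.

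The uniformity over $r$ that you flag as the main obstacle is not really an obstacle. Since $r$ is independent of $w^\star$, Theorem~\ref{thm:capacity} can be applied to the randomized algorithm itself under the uniform prior, which bounds the joint average directly. The same min-versus-average step then finishes; your explicit bound simply makes this automatic.
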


Thus the obstruction is purely information-theoretic:
the probe channel cannot transmit sufficient information to identify
the witness, regardless of internal computation, adaptivity,
or parallelism.

\section{Time-Space Tradeoff in the Psocid Model}
\label{sec:space}

Beyond the vanishing per-probe information rate established in
Section~\ref{sec:model}, the psocid framework exhibits a natural
time-space tradeoff.

Suppose the $p(N)$ parallel searchers collectively use at most
$S$ bits of workspace throughout the computation.
The workspace $S$ bounds the amount of information that can be stored
at any given time, but it does not increase the rate at which new
information about $w^\star$ can be acquired through probes.

Since at most $p(N)$ probes occur per round, the natural regime is
$S = \Theta(p(N))$, meaning that each searcher maintains only
constant or polylogarithmic local memory and total workspace scales
linearly with the degree of parallelism.

Let $T$ denote the number of rounds required for successful discovery.
From \eqref{eq:T-total},
\(
T \;=\; \Omega(2^N/p(N)).
\)
Multiplying by $S=\Theta(p(N))$ yields
\[
T S
\;=\;
\Omega\!\left(\frac{2^N}{p(N)} \cdot p(N)\right)
\;=\;
\Omega(2^N).
\]

\paragraph{Interpretation.}
The tradeoff does not stem from limits on internal computation:
the algorithm may perform arbitrary processing within its workspace.
The bottleneck lies entirely in the probe interface,
through which all information about $w^\star$ must pass.
Each equality probe conveys only $O(N/2^N)$ bits of mutual information,
so increasing workspace does not increase the effective information rate.

Consequently, even with polynomial space and polynomially many
parallel searchers, exponential time is unavoidable under
equality-only access and a uniform prior.
Equivalently, the psocid model obeys the fundamental constraint
\(
TS = \Omega(2^N),
\)
which reflects an information-theoretic limitation.

\section{Conceptual Lineage}
\label{sec:interpretation}

Communication complexity
(Abelson~\cite{Abelson1978}, Yao~\cite{Yao1979})
established that computational hardness may arise from
constraints on information flow rather than from limits on local processing.
Yao's minimax principle~\cite{Yao1977} further showed that
distributional analysis can yield lower bounds that hold uniformly over randomized algorithms.
Together, these developments highlight the central role of
information constraints in understanding algorithmic difficulty.

We internalize this perspective within NP witness recovery,
viewing the task as a rate-limited information-acquisition process.

To this end, we study the psocid model,
a minimal setting in which structural features are removed.
Under a uniform prior on the hidden witness,
the equality-probe interface can be interpreted as a Shannon channel
with vanishing capacity.
The resulting barrier arises from an information-rate mismatch:
reliable recovery requires $\Theta(N)$ bits of mutual information,
whereas polynomially many probes yield only $o(1)$ bits.

This stands in contrast to structured NP settings,
where computational steps can prune large portions of the search space:
a clause eliminates all assignments that falsify its literals,
a cutting plane removes an entire feasible subregion,
and a feasibility certificate may rule out exponentially many possibilities.
Such structure provides global eliminative leverage.

In structured NP problems such as 3-SAT,
each pruning step reduces uncertainty about the solution,
yet the resulting information gain is difficult to quantify:
overlapping constraints and correlated eliminations prevent a clean,
additive characterization in terms of mutual information.
Empirically, structured search often yields significant local information early on,
while the marginal information gain may diminish as the search progresses,
and remain limited in certain critical instances,
leaving a residual search space in which candidates are comparatively difficult to distinguish.

The psocid interface deliberately eliminates this leverage:
each probe excludes only a single candidate,
while preserving symmetry among the remaining possibilities.
This minimality ensures that the difficulty of search is governed entirely
by constraints on information acquisition, yielding a fundamental mismatch
between required and obtainable information.

\medskip
\noindent
\textbf{Relation to query and decision-tree models.}
Query and decision-tree lower bounds typically rely on adversary arguments
and combinatorial techniques that limit how effectively queries can distinguish
among possible inputs.
Such bounds are inherently tied to the underlying query model,
as they quantify the distinguishing power of queries within that model,
and may be circumvented in more powerful computational models such as parallel circuits.
Our work does not seek to strengthen such bounds.
Instead, it recasts unstructured NP witness recovery within an
explicit Shannon-theoretic framework,
deriving impossibility from the accumulation of mutual information
under a fixed uniform prior.
The conceptual contribution is to isolate a regime in which the access interface itself
has vanishing information rate, yielding limitations that cannot be overcome
by additional computation or parallelism.

\section{Concluding Remarks}
\label{sec:conclusion}

This work advances a unifying perspective:
NP witness discovery is viewed as an
\emph{information-acquisition process}.

From this perspective, an algorithm $A$ solving an instance $x$
incurs cost $c(A,x)$ and accumulates a bounded amount
of mutual information about the hidden solution.
Exact recovery requires that the accumulated information
match the entropy of the solution.
Accordingly, the cost required for recovery is governed by the
intrinsic information rate of the access interface:
higher-rate interfaces permit the requisite information
to be acquired with fewer probes (lower cost),
whereas lower-rate interfaces necessitate more.
As a manifestation of this principle,
the psocid model isolates an extreme access regime in which
this intrinsic rate vanishes exponentially.
Consequently, exponentially many probes are required
for reliable witness recovery.

Within NP search, it is natural to ask whether other problems - perhaps in certain critical instances or in later stages of the search - admit
similarly low-capacity access regimes.
Our analysis of the psocid model suggests that exponential search
behavior may arise whenever the intrinsic information flow of the
interface yields vanishing information gain per interaction.

\appendix
\renewcommand{\theequation}{A.\arabic{equation}}
\renewcommand{\thesubsection}{A.\arabic{subsection}}
\setcounter{equation}{0}
\setcounter{subsection}{0}

\section*{Appendix: Asymptotics of Accumulated Mutual Information}

Throughout this appendix, we use natural logarithms.
Changing the logarithm base multiplies all entropy values by the
constant factor $\ln 2$, and therefore affects only constant factors
in the bounds.

\subsection{Accumulation of Mutual Information}

Expanding the right-hand side of \eqref{eq:chainrule} using natural
logarithms, we obtain
\[
\begin{aligned}
& (\ln 2)\sum_{k=1}^q h\!\left(\frac{1}{n-(k-1)}\right) \\
= ~~&
\sum_{k=1}^{q}
\left(
\frac{\ln (n-(k-1))}{n-(k-1)}
-
\Bigl(1-\frac{1}{n-(k-1)}\Bigr)
\ln\Bigl(1-\frac{1}{n-(k-1)}\Bigr)
\right).
\end{aligned}
\]

Let $m:= n-(k-1)$. Since $m$ decreases from $n$ to $n-(q-1)$, 
\begin{equation}\label{eq:mutual-split}
\begin{aligned}
& (\ln 2)\sum_{k=1}^q h\!\left(\frac{1}{n-(k-1)}\right)   = 
(\ln 2)\sum_{m=n}^{m=n-(q-1)} h\!\left(\frac{1}{m}\right) \\
=
&\sum_{m=n-(q-1)}^{n}
\left(
\frac{\ln m}{m}
-
\Bigl(1-\frac{1}{m}\Bigr)
\ln\Bigl(1-\frac{1}{m}\Bigr)
\right) \\
=
&\sum_{m=2}^{n}
\left(
\frac{\ln m}{m}
-
\Bigl(1-\frac{1}{m}\Bigr)
\ln\Bigl(1-\frac{1}{m}\Bigr)
\right) \\
&-
\sum_{m=2}^{n-q}
\left(
\frac{\ln m}{m}
-
\Bigl(1-\frac{1}{m}\Bigr)
\ln\Bigl(1-\frac{1}{m}\Bigr)
\right).
\end{aligned}
\end{equation}

We use the asymptotics
\begin{equation}\label{eq:asymp}
\begin{aligned}
&~~~\sum_{m=2}^{\bar m} \frac{\ln m}{m}
=
\frac12(\ln \bar m)^2 + \gamma_1 + o(1),~~and\\
&-\sum_{m=2}^{\bar m}
\Bigl(1-\frac{1}{m}\Bigr)
\ln\Bigl(1-\frac{1}{m}\Bigr)
=
\ln \bar m + \gamma' + o(1),
\end{aligned}
\end{equation}
where $\gamma_1$ is the first Stieltjes constant
and $\gamma' \approx -0.7885305659$ is an explicit absolute constant.

Substituting \eqref{eq:asymp} into \eqref{eq:mutual-split},
\[\begin{aligned}
(\ln 2)\sum_{k=1}^q h\!\left(\frac{1}{m}\right)
&=
\frac12(\ln n)^2
-\frac12(\ln(n-q))^2 \\
&\quad
+(\ln n-\ln(n-q))
+o(1).
\end{aligned}\]

Rewriting in terms of
\[
x := \ln\!\Bigl(\frac{n}{n-q}\Bigr),
\]
we obtain
\begin{equation}\label{eq:final-form}
(\ln 2)\sum_{k=1}^q h\!\left(\frac{1}{m}\right)
=
(\ln n+1)x - \frac12 x^2 + o(1).
\end{equation}

\subsection{Solving for the Threshold of $q$}

From the Fano lower bound \eqref{eq:qbound},
\[
(\ln 2)\sum_{k=1}^q h\!\left(\frac{1}{m}\right)
\;\ge\;
(1-\varepsilon)\ln n - (\ln 2)h(\varepsilon)
\]
for sufficiently large $n$.

Combining with \eqref{eq:final-form} yields
\[(\ln n+1)x - \frac12 x^2
\;\ge\;
(1-\varepsilon)\ln n - (\ln 2)h(\varepsilon).\]

Rearranging,
\[
x^2 - 2(\ln n+1)x
+2(1-\varepsilon)\ln n - 2(\ln 2)h(\varepsilon)
\;\le\; 0.
\]

Solving the quadratic inequality gives
\begin{equation}\label{eq:x-bound}
x
\ge
(\ln n+1)
-
\sqrt{(\ln n+1)^2
-2(1-\varepsilon)\ln n
+2(\ln 2)h(\varepsilon)}.
\end{equation}

Write
\[
\begin{aligned}
&\sqrt{(\ln n+1)^2-2(1-\varepsilon)\ln n+2(\ln 2)h(\varepsilon)} \\
= ~&
(\ln n+1)
\sqrt{1 - \frac{2(1-\varepsilon)\ln n-2(\ln 2)h(\varepsilon)}{(\ln n+1)^2}}.
\end{aligned}
\]
For large $n$, using the Taylor expansion $\sqrt{1-u}=1-\frac{u}{2}+O(u^2)$ as $u\to 0$,
we obtain
\[
\sqrt{(\ln n+1)^2-2(1-\varepsilon)\ln n+2(\ln 2)h(\varepsilon)}
=
(\ln n+1)
-
(1-\varepsilon)
+
o(1),
\]
where $o(1)\to 0$ as $ n\to\infty$.

Substituting into \eqref{eq:x-bound} yields
\(
x \ge (1-\varepsilon) - o(1).
\)

Returning to $q$,
\[
x=-\ln\!\Bigl(1-\frac{q}{n}\Bigr)
\ge (1-\varepsilon)-o(1),
\]
so
\[
\frac{q}{n}
\ge
1-e^{-(1-\varepsilon)+o(1)}.
\]

Hence
\begin{equation}\label{eq:q-theta}
q \ge \bigl(1-e^{-(1-\varepsilon)+o(1)}\bigr)n
= \Theta(n).
\end{equation}

\subsection{Polynomially Many Probes}

Assume now $q=poly(\ln n)$.
Then $q=o(n)$ and $q/n\to 0$.
Using $\ln(1-u)=-u+O(u^2)$ with $u=q/n$,
\[
x= -\ln\!\Bigl(1-\frac{q}{n}\Bigr)
=
\frac{q}{n}
+O\!\left(\frac{q^2}{n^2}\right).
\]

Substituting into \eqref{eq:final-form},
\[
(\ln 2)\sum_{k=1}^q h\!\left(\frac{1}{m}\right)
=
(\ln n+1)\frac{q}{n}
+O\!\left(\frac{(\ln n)q^2}{n^2}\right).
\]

Since $q=poly(\ln n)$,
\[
(\ln n+1)\frac{q}{n}\to 0,
\qquad
\frac{(\ln n)q^2}{n^2}\to 0,
\]
and therefore
\begin{equation}\label{eq:poly-o1}
(\ln 2)\sum_{k=1}^q h\!\left(\frac{1}{m}\right) = o(1)\quad\Longrightarrow\quad \sum_{k=1}^q h\!\left(\frac{1}{n-(k-1)}\right)
= o(1).
\end{equation}


\end{document}